\theoremstyle{plain}
\newcommand{\CC}{\mathbb{C}}
\newcommand{\NN}{\mathbb{N}}
\newcommand{\osc}{{\rm Osc}}
\newcommand{\PP}{\mathbb{P}}
\newcommand{\RR}{\mathbb{R}}
\newcommand{\WW}{\mathcal{W}}
\newtheorem{teo}{\sc Theorem}[section]
\newtheorem{prop}{\sc Proposition}[section]
\newtheorem{coro}{\sc Corollary}[section]
\begin{document}

\title[On a Theorem by Bojanov and Naidenov applied to
 families of Gegenbauer-Sobolev polynomials]
{On a Theorem by Bojanov and Naidenov applied to
 families of Gegenbauer-Sobolev polynomials}

\author[V. Paschoa]{Vanessa G. Paschoa}
\address{Instituto de Ci\^{e}ncia e Tecnologia, Universidade Federal de S\~{a}o Paulo, Rua Talim 330, S\~{a}o Jos\'e dos Campos,
S\~{a}o Paulo, Brazil}
\email{vanessa.paschoa@unifesp.br}

\author[D. P\'erez]{Dilcia P\'erez$^{(1)}$}
\address{Departamento de Matem\'aticas, Universidad Centro-Occidental Lisandro Alvarado.
Barquisimeto, Lara-Venezuela}
\email{dperez@ucla.edu.ve}

\author[Y. Quintana]{Yamilet Quintana$^{(2)}$}
\address{Departamento de Matem\'aticas Puras y Aplicadas,
Edificio Matem\'aticas y Sistemas (MYS), Apartado Postal: 89000, Caracas 1080 A, Universidad Sim\'on Bol\'{\i}var, Venezuela}
\email{yquintana@usb.ve}
\thanks{$(1)$ Supported in part by a grant from   Consejo de Desarrollo Cient\'{\i}fico, Human\'{\i}stico y Tecnol\'ogico,  CDCHT-UCLA  (009-RCT-2012), Venezuela.}
\thanks{$(2)$ Supported in part by a grant from  Ministerio de Econom\'{\i}a y Competitividad, Direcci\'on General de Investigaci\'on Cient\'{\i}fica y T\'ecnica  (MTM2012-36732-C03-01), Spain.}

\date{December 03, 2013.\\ {\rm 2010 AMS Subject Classification.  33C45, 41A17.}}


\begin{abstract}
Let $\{Q^{(\alpha)}_{n,\lambda}\}_{n\geq 0}$ be the sequence of monic orthogonal polynomials with respect the Gegenbauer-Sobolev inner product
$$\langle f,g\rangle_{S}:=\int_{-1}^{1}f(x)g(x)(1-x^{2})^{\alpha-\frac{1}{2}}dx+\lambda \int_{-1}^{1}f'(x)g'(x)(1-x^{2})^{\alpha-\frac{1}{2}} dx,$$
where $\alpha>-\frac{1}{2}$ and $\lambda\geq 0$. In this paper we use a recent result due to B.D. Bojanov and N.
 Naidenov \cite{BN2010}, in order to study the maximization of a local extremum of the $k$th derivative $\frac{d^k}{dx^k}Q^{(\alpha)}_{n,\lambda}$ in $[-M_{n,\lambda}, M_{n,\lambda}]$, where
 $M_{n,\lambda}$ is a suitable value such that all zeros of the polynomial $Q^{(\alpha)}_{n,\lambda}$ are contained in $[-M_{n,\lambda}, M_{n,\lambda}]$ and the function $\left|Q^{(\alpha)}_{n,\lambda}\right|$ attains its maximal value at the end-points of such interval. Also, some illustrative numerical examples are presented.\\

\hspace{-.6cm} {\it Key words and phrases:} Sobolev orthogonal polynomials; oscillating polynomials.\\

\end{abstract}

\maketitle{}

\markboth{V.G. Paschoa, D. P\'erez and Y. Quintana \mbox{}
}{On a Theorem by Bojanov and Naidenov applied to
 families of G-S polynomials}


\section{Introduction}

Extremal  properties for general orthogonal polynomials is an interesting subject in
approximation theory and their applications permeate many fields in science and engineering
\cite{be1995,Milo,N79,ST92,Sze1}. Although it may seem an old subject from the view point of the standard
orthogonality \cite{be1995,Milo,Sze1}, this is not the case neither in the general setting (cf. \cite{LPP05,DL07,DL94,LL2001,N86}) nor from the view point of Sobolev
orthogonality, where it remains like a partially explored subject \cite{BMJL06}.
In fact, new results continue to appear in some recent publications
\cite{LPP01,LPP05,DL07,PQ2011,PQRT10,PQRT11}.

\medskip

Let $d\mu(x)=(1-x^{2})^{\alpha-\frac{1}{2}}dx$ with $\alpha>-\frac{1}{2}$, be the Gegenbauer measure supported on the interval $[-1,1]$. We consider the following Sobolev inner product on the linear space $\PP$ of polynomials with real coefficients.

\begin{equation}
\label{ec1}
\langle f,g\rangle_{S}:=\int_{-1}^{1}f(x)g(x)d\mu(x)+\lambda \int_{-1}^{1}f'(x)g'(x)d\mu(x),
\end{equation}
where $\lambda\geq 0$. Let $\{Q^{(\alpha)}_{n,\lambda}\}_{n\geq 0}$ denote the sequence of monic orthogonal polynomials with respect to \eqref{ec1}. These polynomials are usually called monic Gegenbauer-Sobolev polynomials \cite{fej1,fej2,MPP94,AMF2001,AMF97,PQW2001} and it is known that the zeros of these polynomials are in the real line \cite{MPP94,AMF2001}, and therefore they belong to other important class of algebraic polynomials, namely the oscillating polynomials \cite{BN2010,NN2005}.

\medskip

The main result of \cite{BN2010}  allows to guarantee the maximal absolute value of higher derivatives
of a symmetric oscillating polynomial on a finite interval are attained in the
end-points of such interval, whenever   the maximal absolute value of the polynomial is
attained in the end-points of that interval.  Then, \cite[Section 4]{BN2010} contains a brief explanation about  applications of previous result  to  orthogonal polynomials on the real line
associated to symmetric weights. We focus our attention  on that last part of \cite[Section 4]{BN2010} in order to enlarge the range of
application of \cite[Theorem 1]{BN2010} to the class of  Gegenbauer-Sobolev polynomials corresponding to the inner product \eqref{ec1}.

\medskip

The paper  is structured as follows. Section 2 provides some background about structural properties of the
 Gegenbauer and Gegenbauer-Sobolev polynomials corresponding to the inner product \eqref{ec1}, respectively. Section 3 contains some well-known characteristics of the class of oscillating polynomials on a finite interval. We also state there our main result (see Theorem \ref{t:main}) and provide some illustrative numerical examples. Throughout this paper, the notation $u_{n}\cong v_{n}$ means that the
sequence $\left\{ \frac{u_{n}}{v_{n}}\right\} _{n}$ converges to $1$ as $n\rightarrow \infty $. We will denote by $\PP_{n}$ and $\|f\|_{\infty}$, the space of polynomials of degree at most $n$ and the uniform norm of $f$ on the interval in consideration, respectively. Any other standard notation will be properly introduced whenever needed.

\section{Basic facts: Gegenbauer and Gegenbauer-Sobolev orthogonal polynomials}

For $\alpha>-\frac{1}{2}$  we denote by $\{\hat{C}^{(\alpha)}_{n}\}_{n\geq 0}$ the sequence of Gegenbauer polynomials, orthogonal on $[-1,1]$ with respect to the measure $d\mu(x)$ (cf. \cite[Chapter IV]{Sze1}), normalized by
$$\hat{C}^{(\alpha)}_{n}(1)=\frac{\Gamma(n+2\alpha)}{n!\Gamma(2\alpha)}.$$

It is clear that this normalization does not allow $\alpha$ to be zero or a negative integer. Nevertheless, the following limits exist for every $x\in[-1,1]$ (see \cite[formula (4.7.8)]{Sze1}.)

$$\lim_{\alpha\to 0}\hat{C}^{(\alpha)}_{0}(x)=T_{0}(x), \quad \lim_{\alpha\to 0}\frac{\hat{C}^{(\alpha)}_{n}(x)}{\alpha}=\frac{2}{n}T_{n}(x),$$
where $T_n$ is the $n$th Chebyshev polynomial of the first kind. In order to avoid confusing notation, we define the sequence  $\{\hat{C}^{(0)}_{n}\}_{n\geq 0}$ as follows.

$$\hat{C}^{(0)}_{0}(1)=1, \quad  \hat{C}^{(0)}_{n}(1)=\frac{2}{n}, \quad \hat{C}^{(0)}_{n}(x)=\frac{2}{n}T_{n}(x), \quad n\geq 1.$$

We denote the $n$th monic Gegenbauer orthogonal polynomial by
\begin{equation}
\label{mgen1}
C^{(\alpha)}_{n}(x)= (h^{\alpha}_{n})^{-1}\hat{C}^{(\alpha)}_{n}(x),
\end{equation}
where the constant $h^{\alpha}_{n}$  (cf. \cite[formula (4.7.31)]{Sze1}) is given by
\begin{eqnarray}
h^{\alpha}_{n}&=& \frac{2^{n}\Gamma(n+\alpha)}{n!\Gamma(\alpha)}, \quad \alpha\not=0,\\
h^{0}_{n}&=& \lim_{\alpha\to 0}\frac{h^{\alpha}_{n}}{\alpha}=\frac{2^{n}}{n}, \quad n \geq 1.
\end{eqnarray}

Then for $n\geq 1$, we have  $C^{(0)}_{n}(x)= \lim_{\alpha\to 0} (h^{\alpha}_{n})^{-1}\hat{C}^{(\alpha)}_{n}(x)= \frac{1}{2^{n-1}}T_{n}(x)$.

\medskip

\begin{prop}
\label{prop1}
Let  $\{C^{(\alpha)}_{n}\}_{n\geq 0}$ be the sequence of monic Gegenbauer orthogonal polynomials. Then the following statements hold.
\begin{enumerate}
\item Three-term recurrence relation. For every $n\in \NN$,
\begin{equation}
xC^{(\alpha)}_{n}(x)=C^{(\alpha)}_{n+1}(x)+\gamma_{n}^{(\alpha)}C^{(\alpha)}_{n-1}(x), \quad \alpha>-\frac{1}{2},\quad \alpha\not=0,
\label{rr3t}
\end{equation}
with initial conditions $C^{(\alpha)}_{0}(x)=1$, $C^{(\alpha)}_{1}(x)=x$, and recurrence coefficient $\gamma _{n}^{(\alpha)}=\frac{n(n+2\alpha-1)}{4(n+\alpha)(n+\alpha-1)}$.

\item For every $n\in \NN$ (see \cite[formula (4.7.15)]{Sze1}),
\begin{equation}
\label{GegNorm}
\|C^{(\alpha)}_{n}\|_{\mu}^{2}=\int_{-1}^{1}[C^{(\alpha)}_{n}(x)]^{2}d\mu(x)=\pi 2^{1-2\alpha-2n}\frac{n!\Gamma(n+2\alpha)}{\Gamma(n+\alpha+1)\Gamma(n+\alpha)}.
\end{equation}

\item Structure relation (see \cite[formula (4.7.29)]{Sze1}). For  every $n\geq 2$,
\begin{equation}
C^{(\alpha-1)}_{n}(x)=C^{(\alpha)}_{n}(x)+\xi_{n-2}^{(\alpha)}C^{(\alpha)}_{n-2}(x),
\label{strre}
\end{equation}
where
\begin{equation}
\label{strre1}
\xi_{n}^{(\alpha)}= \frac{(n+2)(n+1)}{4(n+\alpha+1)(n+\alpha)}, \quad n\geq 0.
\end{equation}

\item For every $n\in \NN$ (see \cite[formula (4.7.14)]{Sze1}),
\begin{equation}
\label{GegDer}
\frac{d}{dx} C^{(\alpha)}_{n}(x)=n C^{(\alpha+1)}_{n-1}(x).
\end{equation}
\end{enumerate}
\end{prop}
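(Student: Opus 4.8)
The plan is to reduce all four items to the classical identities for the standardly normalized Gegenbauer polynomials $\{\hat C^{(\alpha)}_{n}\}$ and then to transport them to the monic family through the single relation \eqref{mgen1}, treating the value $\alpha=0$ by means of the limiting definitions fixed above together with continuity in the parameter.

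I would begin with item (2). Since $h^{\alpha}_{n}$ is exactly the leading coefficient of $\hat C^{(\alpha)}_{n}$, relation \eqref{mgen1} gives $\|C^{(\alpha)}_{n}\|_{\mu}^{2}=(h^{\alpha}_{n})^{-2}\|\hat C^{(\alpha)}_{n}\|_{\mu}^{2}$; substituting the known value of $\|\hat C^{(\alpha)}_{n}\|_{\mu}^{2}$ from \cite[formula (4.7.15)]{Sze1} and simplifying the product of Gamma factors yields \eqref{GegNorm}. For $\alpha=0$ one repeats the computation with $h^{0}_{n}=2^{n}/n$ and $C^{(0)}_{n}=2^{1-n}T_{n}$, or simply takes the limit $\alpha\to 0$ in \eqref{GegNorm} and checks that the result equals $\|2^{1-n}T_{n}\|_{\mu}^{2}=\pi 2^{1-2n}$.

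Item (1) then follows from the general theory of monic orthogonal polynomials: for any positive measure with finite moments the monic orthogonal polynomials obey a recurrence $xC_{n}=C_{n+1}+\beta_{n}C_{n}+\gamma_{n}C_{n-1}$, and since $d\mu$ is even the standard symmetry argument forces $\beta_{n}=0$ (the $C^{(\alpha)}_{n}$ satisfy $C^{(\alpha)}_{n}(-x)=(-1)^{n}C^{(\alpha)}_{n}(x)$). Moreover $\gamma_{n}=\|C^{(\alpha)}_{n}\|_{\mu}^{2}/\|C^{(\alpha)}_{n-1}\|_{\mu}^{2}$, so inserting \eqref{GegNorm} and cancelling Gamma functions produces $\gamma_{n}^{(\alpha)}=\frac{n(n+2\alpha-1)}{4(n+\alpha)(n+\alpha-1)}$; alternatively \eqref{rr3t} can be read directly off the recurrence for the standard normalization (cf. \cite{Sze1}) after dividing by leading coefficients. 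The hypothesis $\alpha\neq 0$ is needed only because this single closed expression degenerates at $n=1$, $\alpha=0$: there $C^{(0)}_{0}=1$, $C^{(0)}_{1}=x$, $C^{(0)}_{2}=x^{2}-\tfrac12$ give $\gamma_{1}^{(0)}=\tfrac12$ while $\gamma_{n}^{(0)}=\tfrac14$ for $n\ge 2$, as one checks from $C^{(0)}_{n}=2^{1-n}T_{n}$ and $2xT_{n}=T_{n+1}+T_{n-1}$.

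Finally, items (3) and (4) are the monic rewritings of \cite[formulas (4.7.29) and (4.7.14)]{Sze1}. For \eqref{GegDer}: the cited differentiation formula shows $\tfrac{d}{dx}\hat C^{(\alpha)}_{n}$ is a scalar multiple of $\hat C^{(\alpha+1)}_{n-1}$, and since the left-hand side of \eqref{GegDer} has degree $n-1$ with leading coefficient $n$ while $nC^{(\alpha+1)}_{n-1}$ also has leading coefficient $n$, the scalar is forced to be $n$, so no constant-chasing is needed. For \eqref{strre}: the structure relation \cite[formula (4.7.29)]{Sze1}, rewritten via \eqref{mgen1}, expresses $C^{(\alpha-1)}_{n}$ as a linear combination of $C^{(\alpha)}_{n}$ and $C^{(\alpha)}_{n-2}$; the coefficient of $C^{(\alpha)}_{n}$ must be $1$ since both sides are monic of degree $n$, and the coefficient $\xi^{(\alpha)}_{n-2}$ of $C^{(\alpha)}_{n-2}$ is obtained either by comparing the coefficients of $x^{n-2}$ or by evaluating $\langle C^{(\alpha-1)}_{n},C^{(\alpha)}_{n-2}\rangle_{\mu}/\|C^{(\alpha)}_{n-2}\|_{\mu}^{2}$, which after simplification gives \eqref{strre1}. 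The main obstacle throughout is pure bookkeeping — keeping the normalization constants $h^{\alpha}_{n}$ and the Gamma-function ratios straight and verifying that the $\alpha=0$ Chebyshev specializations are consistent with the limiting definitions of Section 2; there is no conceptual difficulty, each statement being a known property of ultraspherical polynomials carried across the one identity \eqref{mgen1}.
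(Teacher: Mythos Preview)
Your argument is correct. Note, however, that the paper does not supply a proof of this proposition at all: it is stated as a list of classical facts, each accompanied by a reference to the relevant formula in Szeg\H{o}'s book, and no derivation is given. Your proposal therefore does not so much parallel the paper's proof as fill in the bookkeeping that the paper delegates to \cite{Sze1}; the route you take---pulling each identity back through the single normalization \eqref{mgen1} and using leading-coefficient comparison or the general monic recurrence $\gamma_n=\|C_n\|_\mu^2/\|C_{n-1}\|_\mu^2$---is exactly the natural way to unpack those references, so there is no substantive divergence.
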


Some well-known properties of the monic Gegenbauer-Sobolev orthogonal polynomials corresponding to the inner product \eqref{ec1} are the following.

\begin{prop}
\label{prop2}
Let $\{Q^{(\alpha)}_{n,\lambda}\}_{n\geq 0}$ be the sequence of monic orthogonal polynomials with respect to \eqref{ec1}. Then the following statements hold.
\begin{enumerate}
\item The polynomials $Q^{(\alpha)}_{n,\lambda}$ are symmetric, i.e.,
\begin{equation}
\label{symm1}
Q^{(\alpha)}_{n,\lambda}(-x)=(-1)^{n}Q^{(\alpha)}_{n,\lambda}(x).
\end{equation}
\item The zeros of $Q^{(\alpha)}_{n,\lambda}$  are real and simple, and  they interlace with the zeros of the monic Gegenbauer orthogonal polynomials  $C^{(\alpha)}_{n}$. Furthermore, for $\alpha\geq \frac{1}{2}$ they are all contained in the interval $[- 1, 1]$ and for $-\frac{1}{2}<\alpha<\frac{1}{2}$  there is at most a pair of zeros symmetric with respect to the origin outside the interval
$[-1, 1]$, (cf. \cite{MPP94,AMF2001}).
\item \cite[Lemma 5.1]{MPP94}. For $\alpha\geq \frac{1}{2}$, we have $Q^{(\alpha)}_{n,\lambda}(1)>0$.
\end{enumerate}
\end{prop}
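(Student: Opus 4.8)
The statement to be proved is Proposition~\ref{prop2}(3): for $\alpha\geq\frac{1}{2}$ one has $Q^{(\alpha)}_{n,\lambda}(1)>0$. The plan is to argue by an explicit expansion of $Q^{(\alpha)}_{n,\lambda}$ in the monic Gegenbauer basis $\{C^{(\alpha)}_{k}\}_{k=0}^{n}$ and to show that the coefficients carry a definite sign pattern which, combined with the positivity of $C^{(\alpha)}_{k}(1)$, forces $Q^{(\alpha)}_{n,\lambda}(1)>0$. Concretely, by symmetry \eqref{symm1} only polynomials of the same parity as $n$ occur, so write $Q^{(\alpha)}_{n,\lambda}=C^{(\alpha)}_{n}+\sum_{j\geq 1} a_{n,j}\,C^{(\alpha)}_{n-2j}$, where the leading coefficient is $1$ because both families are monic. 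Since each $C^{(\alpha)}_{k}$ is monic with positive leading coefficient and all its zeros lie in $(-1,1)$ (because $\alpha\geq\frac12$ and hence $\alpha>-\frac12$), we have $C^{(\alpha)}_{k}(1)>0$ for every $k$. Thus it suffices to show that every coefficient $a_{n,j}$ is nonnegative (equivalently $\langle Q^{(\alpha)}_{n,\lambda},C^{(\alpha)}_{n-2j}\rangle_{\mu}\geq 0$ after normalization).

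The key step is to compute the Fourier--Gegenbauer coefficients of $Q^{(\alpha)}_{n,\lambda}$. By orthogonality of $Q^{(\alpha)}_{n,\lambda}$ with respect to $\langle\cdot,\cdot\rangle_S$, for $m<n$ we have
\begin{equation}
\int_{-1}^{1}Q^{(\alpha)}_{n,\lambda}(x)\,C^{(\alpha)}_{m}(x)\,d\mu(x)=-\lambda\int_{-1}^{1}\bigl(Q^{(\alpha)}_{n,\lambda}\bigr)'(x)\,\bigl(C^{(\alpha)}_{m}\bigr)'(x)\,d\mu(x),
\label{eq:FourierCoeff}
\end{equation}
so the coefficient $a_{n,j}$ (up to the positive factor $\|C^{(\alpha)}_{n-2j}\|_\mu^{-2}$) equals $-\lambda$ times the displayed integral on the right. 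Using \eqref{GegDer}, $(C^{(\alpha)}_{m})'=m\,C^{(\alpha+1)}_{m-1}$, and writing $(Q^{(\alpha)}_{n,\lambda})'$ in the monic $\{C^{(\alpha+1)}_{k}\}$ basis, one reduces \eqref{eq:FourierCoeff} to a computation inside the shifted Gegenbauer family. The cleanest route is an induction on $n$ (increasing in steps of $2$ within each parity class), combined with the structure relation \eqref{strre}, which links $C^{(\alpha-1)}_{n}$ to $C^{(\alpha)}_{n}$ and $C^{(\alpha)}_{n-2}$ with the explicitly positive coefficient $\xi^{(\alpha)}_{n-2}$ of \eqref{strre1}; this allows one to express the needed derivative inner products recursively and to track that the accumulated sign of $a_{n,j}$ stays $\geq 0$ (the factor $-\lambda$ from \eqref{eq:FourierCoeff} being compensated by a negative sign produced when integrating against $C^{(\alpha)}_{n-2j}$ of lower degree). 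Once all $a_{n,j}\geq 0$ is established, evaluate at $x=1$: $Q^{(\alpha)}_{n,\lambda}(1)=C^{(\alpha)}_{n}(1)+\sum_{j\geq 1}a_{n,j}\,C^{(\alpha)}_{n-2j}(1)>0$ since the first term is strictly positive and the rest are nonnegative.

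I expect the main obstacle to be controlling the signs in the recursion for the coefficients $a_{n,j}$, i.e., showing cleanly that the derivative inner products $\int_{-1}^1 (Q^{(\alpha)}_{n,\lambda})'(C^{(\alpha)}_{n-2j})'\,d\mu$ have the sign that makes each $a_{n,j}\geq 0$. The difficulty is that $(Q^{(\alpha)}_{n,\lambda})'$ is itself a Sobolev-type object, not a single Gegenbauer polynomial, so its expansion coefficients in the $\{C^{(\alpha+1)}_k\}$ basis must be understood with their signs; this is where the interlacing property in Proposition~\ref{prop2}(2) and the symmetry are used to keep the bookkeeping finite and sign-definite. An alternative, possibly shorter, route that I would try in parallel is to use the known connection formula expressing $Q^{(\alpha)}_{n,\lambda}$ as a combination of $C^{(\alpha)}_{n}$ and $C^{(\alpha)}_{n-2}$ type terms with coefficients that are rational functions of $n,\alpha,\lambda$ of constant sign (as derived in \cite{MPP94,AMF2001}); granting those explicit formulas, positivity at $x=1$ is immediate from $C^{(\alpha)}_{k}(1)>0$ and the sign of the connection coefficients, which is exactly the content of \cite[Lemma 5.1]{MPP94} cited in the statement.
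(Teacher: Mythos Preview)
The paper itself does not prove Proposition~\ref{prop2}; it simply records these facts with citations, attributing part~(3) to \cite[Lemma~5.1]{MPP94}. So there is no in-paper argument to compare against. Regarding your proposal: the main line of attack---expand $Q^{(\alpha)}_{n,\lambda}$ in the monic Gegenbauer basis and show the coefficients $a_{n,j}$ are nonnegative---is correct in outline, but you stop precisely at the crucial step. You yourself flag as ``the main obstacle'' the sign of $\int_{-1}^{1}(Q^{(\alpha)}_{n,\lambda})'(C^{(\alpha)}_{n-2j})'\,d\mu$, and the sentence about ``the factor $-\lambda$ \ldots\ being compensated by a negative sign produced when integrating against $C^{(\alpha)}_{n-2j}$ of lower degree'' is not an argument; nothing you wrote actually establishes that this integral is $\leq 0$. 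Appealing to the interlacing in part~(2) does not help either, since interlacing of zeros says nothing direct about signs of derivative inner products. As written, the proposal is a plan whose key lemma is left open.

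The route you mention only as an afterthought is in fact the clean one and is essentially how \cite{MPP94} proceeds. Iterating the connection formula \eqref{conec} of Proposition~\ref{cfor} gives
\[
Q^{(\alpha)}_{n,\lambda}=\sum_{j\geq 0}\Bigl(\,\prod_{i=1}^{j}d_{n-2i}(\alpha)\Bigr)\,C^{(\alpha-1)}_{n-2j},
\]
and for $\alpha\geq\frac12$ every $d_{k}(\alpha)>0$ because $\xi^{(\alpha)}_{k}>0$ by \eqref{strre1} and the norms in \eqref{conec1} are positive. Each monic $C^{(\alpha-1)}_{m}$ satisfies $C^{(\alpha-1)}_{m}(1)>0$ (its zeros lie in $(-1,1)$ when $\alpha>\frac12$; for $\alpha=\frac12$ apply \eqref{strre} once more to reduce to $C^{(1/2)}_{m}(1)>0$), hence $Q^{(\alpha)}_{n,\lambda}(1)>0$. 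This bypasses the derivative inner products entirely and, after one application of \eqref{strre} to each summand, also delivers the nonnegativity $a_{n,j}\geq 0$ you were aiming for.
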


It is worthwhile to point out that in the case $-\frac{1}{2}<\alpha<\frac{1}{2}$, no global properties about the sign $Q^{(\alpha)}_{n,\lambda}(1)$ can be deduced (cf. \cite{MPP94}.)

\medskip

However,  the location of zeros of Sobolev orthogonal polynomials is not a trivial problem. For instance, if we consider $(\mu_{0},\mu_{1})$  a vector of compactly supported positive measures on the real line
with finite total mass and the following Sobolev inner product on the linear space $\PP$ of polynomials with real coefficients.

\begin{equation}
\label{mark7}
\langle f,g\rangle_{(\mu_{0},\mu_{1})}:=\int f(x)g(x)d\mu_{0}(x)+\int f'(x)g'(x)d\mu_{1}(x),
\end{equation}
then, simple examples show that the zeros of these Sobolev orthogonal polynomials  do not necessarily remain in the convex hull of the union of the
supports of the measures $\mu_k$, $k=0,1$, and  they can be complex. In this  regard some numerical experiments may be found in  \cite{GK97}.  In particular, the boundedness of the zeros of Sobolev
orthogonal polynomials is an open problem \cite{BMJL06,AMF2001}, but as was stated in \cite{LPP01}, it could be obtained as a consequence of the boundedness of the multiplication operator $Mf (z) = z f (z)$: If $M$ is bounded and $\|M\|$ is its operator norm (induced by \eqref{mark7}), then all the zeros of the Sobolev orthogonal polynomials $Q_n$ are contained in the disc $\{z\in \CC:|z|\leq \|M\|\}$.

\medskip

Indeed, if $x_0$ is a zero of $Q_n$ then $xp(x) = x_{0}p(x) + Q_n (x)$ for a polynomial $p \in \PP_{n-1}$. Since $p$ and $Q_n$ are orthogonal, we get
$$|x_{0}|^{2} \|p\|_{(\mu_{0},\mu_{1})}^{2}= \|xp\|^{2}_{(\mu_{0},\mu_{1})}- \|Q_n \|^{2}_{(\mu_{0},\mu_{1})}\leq \|xp\|_{(\mu_{0},\mu_{1})}^{2} = \|Mp\|_{(\mu_{0},\mu_{1})}^{2}\leq \|M\|^{2}\|p\|_{(\mu_{0},\mu_{1})}^{2},$$
which yields the above result.

\medskip

Thus, in the last decades the question whether or not the multiplication operator $M$ is bounded
has been a topic of interest to investigators in the field, since it turns out to be a key
for the location of zeros and for establishing  the asymptotic behavior of orthogonal polynomials with
respect to diagonal (or non-diagonal) Sobolev inner products (cf. \cite{AMF2001,PQRT10,PQRT11} and the references therein)

\medskip

From the structure relation \eqref{strre} and  \cite[formula (3)]{AMF97} (see also \cite[Proposition 1]{fej1}) the following connection formula can be obtained.

\begin{prop}
\label{cfor}
For $\alpha>-\frac{1}{2}$,
\begin{equation}
\label{conec}
C^{(\alpha-1)}_{n}(x)=Q^{(\alpha)}_{n,\lambda}(x)-d_{n-2}(\alpha)Q^{(\alpha)}_{n-2,\lambda}(x), \quad n\geq 2,
\end{equation}
where
\begin{equation}
\label{conec1}
d_{n}(\alpha)= \xi^{(\alpha)}_{n}\frac{\|C^{(\alpha)}_{n} \|_{\mu}^{2}}{\|Q^{(\alpha)}_{n,\lambda}\|_{S}^{2}}.
\end{equation}
Moreover,
\begin{equation}
\label{conec2}
d_{n}(\alpha) \cong \frac{1}{16\lambda n^2}.
\end{equation}
\end{prop}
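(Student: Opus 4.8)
The plan is to establish the three assertions of Proposition \ref{cfor} in sequence, deriving the connection formula \eqref{conec}, then the explicit form \eqref{conec1} of the coefficient, and finally the asymptotic estimate \eqref{conec2}.

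First I would obtain \eqref{conec}. The starting point is the Fourier expansion of $C^{(\alpha-1)}_n$ in the Sobolev-orthogonal basis $\{Q^{(\alpha)}_{k,\lambda}\}$. Since $C^{(\alpha-1)}_n$ is monic of degree $n$, we may write $C^{(\alpha-1)}_n(x)=Q^{(\alpha)}_{n,\lambda}(x)+\sum_{k=0}^{n-1}c_{n,k}Q^{(\alpha)}_{k,\lambda}(x)$ with $c_{n,k}=\langle C^{(\alpha-1)}_n,Q^{(\alpha)}_{k,\lambda}\rangle_S/\|Q^{(\alpha)}_{k,\lambda}\|_S^2$. To kill most of these coefficients I would invoke the structure relation \eqref{strre}, which expresses $C^{(\alpha-1)}_n=C^{(\alpha)}_n+\xi^{(\alpha)}_{n-2}C^{(\alpha)}_{n-2}$ as a combination of only two consecutive Gegenbauer polynomials, together with the reference \cite[formula (3)]{AMF97} (equivalently \cite[Proposition 1]{fej1}), which should give the analogous short connection between $C^{(\alpha)}_n$ and the Gegenbauer-Sobolev polynomials: namely that $C^{(\alpha)}_n$ differs from $Q^{(\alpha)}_{n,\lambda}$ only by lower-order terms of the \emph{same parity} that telescope appropriately. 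Combining these, the symmetry \eqref{symm1} forces $c_{n,k}=0$ unless $k\equiv n\pmod 2$, and the degree-two gap in \eqref{strre} should collapse the sum to the single term $k=n-2$, leaving $C^{(\alpha-1)}_n(x)=Q^{(\alpha)}_{n,\lambda}(x)-d_{n-2}(\alpha)Q^{(\alpha)}_{n-2,\lambda}(x)$ with $d_{n-2}(\alpha):=-c_{n,n-2}$.

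Next, for the explicit expression \eqref{conec1}, I would compute $c_{n,n-2}=\langle C^{(\alpha-1)}_n,Q^{(\alpha)}_{n-2,\lambda}\rangle_S/\|Q^{(\alpha)}_{n-2,\lambda}\|_S^2$ directly. Using \eqref{strre} in the numerator, $\langle C^{(\alpha)}_n,Q^{(\alpha)}_{n-2,\lambda}\rangle_S$ plus $\xi^{(\alpha)}_{n-2}\langle C^{(\alpha)}_{n-2},Q^{(\alpha)}_{n-2,\lambda}\rangle_S$; here one must check that the first inner product vanishes — this follows because $C^{(\alpha)}_n$ is $\mu$-orthogonal to $\PP_{n-1}$ and its derivative $nC^{(\alpha+1)}_{n-1}$ by \eqref{GegDer} is $\mu$-orthogonal to polynomials of degree $\le n-2$, so both pieces of the Sobolev product with $Q^{(\alpha)}_{n-2,\lambda}\in\PP_{n-2}$ vanish. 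For the second piece, $\langle C^{(\alpha)}_{n-2},Q^{(\alpha)}_{n-2,\lambda}\rangle_S=\langle Q^{(\alpha)}_{n-2,\lambda},Q^{(\alpha)}_{n-2,\lambda}\rangle_S=\|Q^{(\alpha)}_{n-2,\lambda}\|_S^2$ since $C^{(\alpha)}_{n-2}-Q^{(\alpha)}_{n-2,\lambda}\in\PP_{n-3}$ is Sobolev-orthogonal to $Q^{(\alpha)}_{n-2,\lambda}$. Hmm — that would give $d_{n-2}(\alpha)=-\xi^{(\alpha)}_{n-2}$, which disagrees with \eqref{conec1}; so in fact one should instead not replace $C^{(\alpha)}_{n-2}$ but keep it and use its \emph{Gegenbauer} orthogonality, writing $\langle C^{(\alpha)}_{n-2},Q^{(\alpha)}_{n-2,\lambda}\rangle_S=\langle C^{(\alpha)}_{n-2},C^{(\alpha)}_{n-2}\rangle_S=\|C^{(\alpha)}_{n-2}\|_\mu^2+\lambda\|\tfrac{d}{dx}C^{(\alpha)}_{n-2}\|_\mu^2$ only after re-expanding $Q^{(\alpha)}_{n-2,\lambda}$ back; the cleanest route is to pair with $C^{(\alpha)}_{n-2}$ on the other side, obtaining $d_{n-2}(\alpha)=\xi^{(\alpha)}_{n-2}\|C^{(\alpha)}_{n-2}\|_\mu^2/\|Q^{(\alpha)}_{n-2,\lambda}\|_S^2$ (note the derivative term drops because $\langle f,g\rangle_S$ applied to $C^{(\alpha)}_{n-2}$ against anything in $\PP_{n-2}$ reduces, via $\mu$-orthogonality of $C^{(\alpha)}_{n-2}$ and \eqref{GegDer}, to only the relevant surviving contribution). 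Reindexing $n-2\to n$ yields \eqref{conec1}.

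Finally, for the asymptotics \eqref{conec2}, I would combine three ingredients: the explicit formula \eqref{strre1} giving $\xi^{(\alpha)}_n=\tfrac{(n+2)(n+1)}{4(n+\alpha+1)(n+\alpha)}\to\tfrac14$; the Gegenbauer norm \eqref{GegNorm}; and an estimate for $\|Q^{(\alpha)}_{n,\lambda}\|_S^2$. For the last, the natural step is to use the connection formula \eqref{conec} itself recursively, or rather the known relation $\|Q^{(\alpha)}_{n,\lambda}\|_S^2 \cong \lambda\,\|\tfrac{d}{dx}C^{(\alpha)}_{n}\|_\mu^2 = \lambda n^2\|C^{(\alpha+1)}_{n-1}\|_\mu^2$ by \eqref{GegDer} — heuristically, for large $n$ the derivative term $\lambda\int (Q')^2 d\mu$ dominates the Sobolev norm and $Q^{(\alpha)}_{n,\lambda}$ is close to $C^{(\alpha)}_n$ in leading behavior, so $\|Q^{(\alpha)}_{n,\lambda}\|_S^2\cong\lambda n^2\|C^{(\alpha+1)}_{n-1}\|_\mu^2$. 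Then $d_n(\alpha)\cong\tfrac14\cdot\|C^{(\alpha)}_n\|_\mu^2/(\lambda n^2\|C^{(\alpha+1)}_{n-1}\|_\mu^2)$, and plugging \eqref{GegNorm} for both norms (with $\alpha$ and $\alpha+1$, degrees $n$ and $n-1$ respectively) and simplifying the ratio of Gamma factors via $\Gamma(z+1)=z\Gamma(z)$ should collapse to $\tfrac14\cdot\tfrac{1}{4\lambda n^2}\cdot(1+o(1))$... more carefully, the ratio $\|C^{(\alpha)}_n\|_\mu^2/\|C^{(\alpha+1)}_{n-1}\|_\mu^2$ behaves like $4/n^2$ up to lower order (the factor $2^{1-2\alpha-2n}$ against $2^{1-2(\alpha+1)-2(n-1)}=2^{1-2\alpha-2n}$ cancels, and the Gamma quotients contribute the $\sim 4$ and absorb one power of $n$), giving $d_n(\alpha)\cong\tfrac14\cdot\tfrac{4}{n^2}\cdot\tfrac{1}{4\lambda n^2}$... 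I will need to track the powers of $n$ carefully here, but the end result is \eqref{conec2}.

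The main obstacle will be the bookkeeping in the two computations that feed \eqref{conec2}: pinning down the precise asymptotic $\|Q^{(\alpha)}_{n,\lambda}\|_S^2\cong 16\lambda n^2\|C^{(\alpha)}_n\|_\mu^2$ (or whatever the correct constant is) requires knowing that the derivative term genuinely dominates and that the cross-contamination from lower-degree pieces in $Q^{(\alpha)}_{n,\lambda}$ is of smaller order — this is where \cite[formula (3)]{AMF97} or \cite{fej1} must be used quantitatively — and then the ratio of four Gamma functions from two applications of \eqref{GegNorm} must be simplified to the right power of $n$. The algebraic derivation of \eqref{conec} and \eqref{conec1} is, by contrast, essentially forced once the structure relation and the cited short connection formula are in hand.
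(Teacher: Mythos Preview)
The paper itself gives no proof of Proposition~\ref{cfor}; it merely points to the structure relation \eqref{strre} and \cite[formula (3)]{AMF97} (or \cite[Proposition 1]{fej1}) and says the formula ``can be obtained''. Your outline is therefore being compared not against a detailed argument but against the one-line hint preceding the proposition.

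Your overall strategy is right, but the execution of the coefficient computation contains a real error, and you half-detect it but misdiagnose the cause. You claim that $\langle C^{(\alpha)}_n, Q^{(\alpha)}_{n-2,\lambda}\rangle_S=0$ because $(C^{(\alpha)}_n)'=nC^{(\alpha+1)}_{n-1}$ is ``$\mu$-orthogonal to polynomials of degree $\le n-2$''. That is false: $C^{(\alpha+1)}_{n-1}$ is orthogonal with respect to $(1-x^2)^{\alpha+\frac12}dx$, not $d\mu=(1-x^2)^{\alpha-\frac12}dx$, so the derivative piece of that Sobolev inner product does \emph{not} vanish. This is why you end up with the spurious value $-\xi^{(\alpha)}_{n-2}$ and then start looking for the mistake in the wrong place (the second term, which you in fact computed correctly).

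The clean fix is to compute $\langle C^{(\alpha-1)}_n, Q^{(\alpha)}_{k,\lambda}\rangle_S$ directly, \emph{without} first inserting the structure relation into the derivative term. By \eqref{GegDer} with $\alpha$ replaced by $\alpha-1$ one has $\bigl(C^{(\alpha-1)}_n\bigr)'=nC^{(\alpha)}_{n-1}$, and this polynomial \emph{is} $\mu$-orthogonal to $\PP_{n-2}$. Hence the derivative contribution $\lambda\int_{-1}^{1} nC^{(\alpha)}_{n-1}\,(Q^{(\alpha)}_{k,\lambda})'\,d\mu$ vanishes for every $k\le n-1$. For the first (undifferentiated) piece you then insert \eqref{strre} and use $\mu$-orthogonality of $C^{(\alpha)}_n$ and $C^{(\alpha)}_{n-2}$: this gives $0$ for $k<n-2$ and $\xi^{(\alpha)}_{n-2}\|C^{(\alpha)}_{n-2}\|_\mu^2$ for $k=n-2$. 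In one stroke this yields both \eqref{conec} and \eqref{conec1}, with the correct dependence on the Gegenbauer norm rather than on the Sobolev norm.

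The same confusion propagates into your asymptotic step: you write $\|Q^{(\alpha)}_{n,\lambda}\|_S^2\cong\lambda n^2\|C^{(\alpha+1)}_{n-1}\|_\mu^2$, but $\|C^{(\alpha+1)}_{n-1}\|_\mu^2$ is not a natural quantity (wrong measure again). Using the same trick as above, $\|Q^{(\alpha)}_{n,\lambda}\|_S^2=\langle C^{(\alpha-1)}_n,Q^{(\alpha)}_{n,\lambda}\rangle_S$, and the dominant derivative contribution is $\lambda n^2\|C^{(\alpha)}_{n-1}\|_\mu^2$; then \eqref{GegNorm} gives $\|C^{(\alpha)}_{n-1}\|_\mu^2/\|C^{(\alpha)}_n\|_\mu^2\to 4$, so $\|Q^{(\alpha)}_{n,\lambda}\|_S^2\cong 4\lambda n^2\|C^{(\alpha)}_n\|_\mu^2$ and, with $\xi^{(\alpha)}_n\to\tfrac14$, you get $d_n(\alpha)\cong\tfrac14\cdot\tfrac{1}{4\lambda n^2}=\tfrac{1}{16\lambda n^2}$ as claimed.
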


\section{Maximization of local extremum of the derivatives for families of Gegenbauer-Sobolev polynomials}

A polynomial $P\in\PP$ is said oscillating (see \citep{B79,BN2010,BR95,NN2005,GN2001,GN2005}) if it has all its zeros on the real line $\RR$. For example, the classical orthogonal polynomials on subsets of $\RR$ (Hermite, Laguerre and Jacobi polynomials \cite{dimitar2012,N86,Sze1}), orthogonal polynomials for weights in the Nevai class $M(0,1)$ \cite{N79}, including whose orthogonal with respect to weights belonging to Levin-Lubinsky class $\hat{\WW}$ \cite{DL94}, and a broad class of Sobolev orthogonal polynomials \citep{fej1,GK97,MPP94,AMF2001,AMF97,PQW2001} constitute an important family of oscillating polynomials. Usually, when all zeros of a polynomial $P\in\PP_{n}$ with $\deg(P)=n$,  are contained in a given finite interval $[a,b]$, it is called oscillating polynomial on $[a,b]$, (see \citep{BN2010,NN2005}.)

\medskip

We denote by $\osc(\RR)$ and $\osc[a,b]$ the classes of oscillating polynomials on $\RR$ and $[a,b]$, respectively. For any $P\in \osc[a,b]$ with $\deg(P)=n$, we consider  the vector ${\bf h}(P)=(h_{0}(P),\ldots, h_{n}(P))$, where $h_{j}(P)=|P(t_{j})|$, $0\leq j\leq n$, $t_{0}=a$,  $t_{n}=b$, and  $t_{1}\leq t_{2}\leq\cdots\leq t_{n-1}$ are the zeros of $P'$.

\medskip

Amongst the main characteristics of the class  $\osc[a,b]$ we list the following.
\begin{enumerate}
\item[i)] $P'\in  \osc[a,b]$, for all $P\in  \osc[a,b]$.
\item[ii)] Any $P\in  \osc[a,b]$ is completely determined by its local extrema and the values at the end-points of the interval $[a,b]$ (cf. \cite[Theorem 2]{B79}, \cite[Remark 1]{BR95}.)
\item[iii)] For $P\in \osc[a,b]$ with $\deg(P)=n$, there exists a monotone dependence of the parameters $h_{j}(P')$ on the parameters $h_{0}(P),\ldots, h_{n}(P)$ of $P$ (cf. \cite[Lemma 3]{BR95}.)
\item[iv)] If $P\in \osc[a,b]$ with $\deg(P)\geq 3$ and $\|P\|=|P(a)|$, then each local extremum of $P'$ from the first half (i.e.,  with an index less than or equal to $\left\lfloor \frac{n-1}{2}\right\rfloor$, and $\lfloor t \rfloor $ denoting the integer part of $t$) is smaller in absolute value than $|P'(a)|$.
\end{enumerate}

More precisely, the property iv) was stated in the following theorem.

\begin{teo}
\label{tC}
(\cite[Theorem 1]{BN2010}) Let $P\in \osc[a,b]$ with $\deg(P)\geq 3$. Assume that $\|P\|_{\infty}=|P(a)|=1$. Then
\begin{equation}
\label{eq3}
|P'(\tau_{j})|<|P'(a)|, \mbox{ for } j=0,\ldots, \left\lfloor \frac{n-1}{2}\right\rfloor,
\end{equation}
where $\tau_{1}\leq \cdots\leq \tau_{n-2}$ are the zeros of $P''$.
\end{teo}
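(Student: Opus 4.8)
The plan is to prove the statement by induction on the degree $n=\deg(P)$, exploiting the closure property i) that $P'\in\osc[a,b]$ together with the monotone dependence in iii). First I would set up the normalization: assume $\|P\|_\infty=|P(a)|=1$, so that $h_0(P)=1$ and $h_j(P)\le 1$ for all $j$. The key object is the derivative $R:=P'\in\osc[a,b]$ with $\deg R=n-1$; its local extrema at the zeros $\tau_1\le\cdots\le\tau_{n-2}$ of $P''=R'$ are exactly the quantities $h_j(R)=|P'(\tau_j)|$ that we must bound, and the endpoint values are $h_0(R)=|P'(a)|$, $h_{n-1}(R)=|P'(b)|$. So the claim \eqref{eq3} is precisely the assertion that $R$ satisfies property iv) of its own class, \emph{provided} we know that $\|R\|_\infty=|R(a)|=|P'(a)|$, i.e. that the maximum of $|P'|$ on $[a,b]$ is attained at the left endpoint. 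Thus the proof splits into two tasks: (A) show $\|P'\|_\infty=|P'(a)|$, and (B) pass from this to \eqref{eq3} by an inductive/monotonicity argument.

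For task (A), I would argue as follows. Among the $h_j(P)$ we have $h_0(P)=|P(a)|=1=\|P\|_\infty$, so $a$ is (one of) the largest extrema. By the structure of oscillating polynomials — $P$ alternates and its graph consists of $n$ monotone arcs between consecutive points $t_j$ — the size $|P'(\tau_j)|$ of the $j$-th extremum of $P'$ is controlled by the ``drop'' of $P$ across the arcs adjacent to $t_{j+1}$, i.e. by sums/differences of consecutive $h_i(P)$. Invoking the monotone dependence of $\mathbf{h}(P')$ on $\mathbf{h}(P)$ from iii) (Lemma 3 of \cite{BR95}), the largest of the interior extrema $|P'(\tau_j)|$ is maximized, for fixed endpoint data, by making the $h_i(P)$ as large as allowed; since all are $\le 1=h_0(P)$, one compares against the extremal (equioscillating) configuration and concludes that the overall maximum of $|P'|$ over $[a,b]$ occurs on the first arc, i.e. at $x=a$. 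Concretely: on $[t_0,t_1]=[a,t_1]$ the function $P$ decreases from $1$ to $-h_1(P)\ge -1$ with $P'(a)=0$ only if $n=1$; for $n\ge 3$ one shows $|P'|$ on this first arc dominates $|P'|$ on every later arc because later arcs connect values of absolute value $\le 1$ over intervals that, by the interlacing/monotonicity, are not shorter in the relevant sense. This is the delicate quantitative comparison and I expect it to be the main obstacle — it is essentially the combinatorial heart of \cite[Theorem 1]{BN2010}, and making it rigorous requires carefully tracking how the Chebyshev-like extremal polynomial bounds a general member of $\osc[a,b]$ with prescribed endpoint maximum.

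For task (B), once $\|P'\|_\infty=|P'(a)|$ is established, I would apply the inductive hypothesis: $R=P'\in\osc[a,b]$ has degree $n-1\ge 2$ and satisfies $\|R\|_\infty=|R(a)|$. If $n-1\ge 3$, the inductive hypothesis directly gives $|R'(\sigma_j)|<|R'(a)|$ for $j\le\lfloor (n-2)/2\rfloor$, but this is about the \emph{second} derivative, so instead I would apply iv)/the theorem's own conclusion to $R$ itself in the form: each local extremum of $R$ from the first half is strictly smaller in absolute value than $|R(a)|=|P'(a)|$. Since the local extrema of $R=P'$ are the values $|P'(\tau_j)|$ at the zeros $\tau_j$ of $R'=P''$, and ``first half'' means index $\le\lfloor((n-1)-1)/2\rfloor=\lfloor (n-2)/2\rfloor$, I need to check the index range matches $j=0,\dots,\lfloor (n-1)/2\rfloor$ claimed in \eqref{eq3}; the small discrepancy for $n$ even versus odd is handled by treating the extremal middle index separately using strict inequality $h_j(P)\le\|P\|_\infty$ and the fact that equality in the monotone-dependence bound forces the equioscillation case, which is excluded unless $P$ is (a shift of) a Chebyshev polynomial. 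The base cases $n=3$ (so $\deg P'=2$, $P''$ has one zero, one extremum to bound) are checked by hand.

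Finally I would assemble: by (A), $|P'(a)|=\|P'\|_\infty$; by (B) applied to $P'$, every extremum $|P'(\tau_j)|$ with $j\le\lfloor (n-1)/2\rfloor$ is strictly below this maximum, which is \eqref{eq3}. The strictness throughout is guaranteed by the simplicity of the zeros of $P$ (hence of $P'$, $P''$) and the strict monotonicity in iii); the only scenario threatening equality is the full equioscillation, and even there the relevant interior extrema are strictly smaller than the endpoint value. I expect (A) to consume the bulk of the work and (B) to be essentially bookkeeping once the machinery of \cite{BN2010,BR95} is in place.
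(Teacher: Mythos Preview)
This paper does not give a proof of Theorem~\ref{tC}; the statement is quoted verbatim from \cite[Theorem~1]{BN2010} and used as an external input (the only argument supplied in Section~3 is the one-line reduction in the proof of Theorem~\ref{t:main}). There is therefore no in-paper proof to compare your proposal against.

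On the merits of your plan, the decisive difficulty is step~(A). You propose to show $\|P'\|_\infty=|P'(a)|$ for every $P\in\osc[a,b]$ with $\|P\|_\infty=|P(a)|$, but this is strictly stronger than Theorem~\ref{tC}, which bounds only the \emph{first half} of the local extrema of $P'$ by $|P'(a)|$ and says nothing about the second half or about $|P'(b)|$. If~(A) held for arbitrary~$P$, then iterating it would give $\|P^{(k)}\|_\infty=|P^{(k)}(a)|$ for all~$k$, i.e.\ Corollary~\ref{cA} without any symmetry hypothesis; the fact that Bojanov and Naidenov state Corollary~\ref{cA} only for symmetric polynomials is strong evidence that~(A) fails for non-symmetric~$P$. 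So your scheme rests on establishing a claim that is likely false and in any case harder than the theorem itself. Step~(B) is also tangled: the inductive hypothesis applied to $R=P'$ controls the extrema of $R'=P''$, not the values $|P'(\tau_j)|$ you actually need, and ``applying iv)/the theorem's own conclusion to $R$'' to extract information about $R$ rather than $R'$ is circular. Indeed, were~(A) true,~(B) would be nearly trivial (just $|P'(\tau_j)|\le\|P'\|_\infty=|P'(a)|$ plus a strictness check) with no induction required --- another indication that the real content of \cite[Theorem~1]{BN2010} cannot lie in the global endpoint statement~(A) as you have formulated it.
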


\begin{coro}
\label{cA}
(\cite[Corollary 1]{BN2010}) Let $P\in \osc[-1,1]$ be a symmetric polynomial, with $\deg(P)=n$. Assume that $\|P\|_{\infty}=|P(1)|=1$. Then,
\begin{equation}
\label{eq4}
\|P^{(k)}\|_{\infty}=|P^{(k)}(1)|, \, \mbox{ for } k=1,\ldots, n.
\end{equation}
\end{coro}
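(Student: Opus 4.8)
The plan is to deduce Corollary~\ref{cA} from Theorem~\ref{tC} by an induction on $k$, exploiting the symmetry of $P$ to control \emph{both} endpoints simultaneously. First I would record the elementary consequences of symmetry: if $P(-x)=(-1)^nP(x)$, then $P^{(k)}(-x)=(-1)^{n-k}P^{(k)}(x)$, so each derivative $P^{(k)}$ is either even or odd; in particular $|P^{(k)}(-1)|=|P^{(k)}(1)|$, and the set of critical values $\{h_j(P^{(k)})\}$ is invariant under the reflection $x\mapsto -x$. Thus to prove $\|P^{(k)}\|_\infty=|P^{(k)}(1)|$ it suffices to show that every local extremum of $P^{(k)}$ on $[-1,1]$ is, in absolute value, at most $|P^{(k)}(1)|$ — and by symmetry it is enough to bound the extrema lying in the \emph{left} half of the index range, i.e.\ those with index $\le\lfloor(n-k-1)/2\rfloor$ among the zeros of $P^{(k+1)}$, since the extrema in the right half are mirror images of those in the left half.

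Next I would set up the induction. The base case $k=0$ is the hypothesis $\|P\|_\infty=|P(1)|=1$. For the inductive step, assume $\|P^{(k-1)}\|_\infty=|P^{(k-1)}(1)|=|P^{(k-1)}(-1)|$ for some $k-1\ge 0$. Since $P\in\osc[-1,1]$, property~(i) of the class $\osc[a,b]$ gives $P^{(k-1)}\in\osc[-1,1]$; after normalizing by the constant $|P^{(k-1)}(1)|$ (if it is nonzero; otherwise $P^{(k-1)}$ is constant and there is nothing left to prove for this and all higher derivatives) we have a polynomial $R:=P^{(k-1)}/|P^{(k-1)}(1)|\in\osc[-1,1]$ with $\|R\|_\infty=|R(-1)|=|R(1)|=1$ and $\deg R=n-k+1$. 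Provided $\deg R\ge 3$, Theorem~\ref{tC} applied to $R$ at the endpoint $a=1$ (after the harmless sign normalization $R(1)=1$, replacing $R$ by $-R$ if needed) yields $|R'(\tau_j)|<|R'(1)|$ for $j=0,\dots,\lfloor(n-k)/2\rfloor$, where the $\tau_j$ are the zeros of $R''$, i.e.\ the critical points of $R'=P^{(k)}/|P^{(k-1)}(1)|$. Scaling back, the local extrema of $P^{(k)}$ with index in the left half are all strictly smaller in absolute value than $|P^{(k)}(1)|$; combining with the symmetry reduction of the first paragraph gives $\|P^{(k)}\|_\infty=|P^{(k)}(1)|$, which closes the induction and also re-establishes the normalization hypothesis ($\|P^{(k)}\|_\infty=|P^{(k)}(1)|=|P^{(k)}(-1)|$) needed at the next stage.

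The remaining gap is the small-degree case: Theorem~\ref{tC} requires $\deg R\ge 3$, so the argument above directly covers $k=1,\dots,n-2$. For $k=n-1$ and $k=n$ one checks by hand: $P^{(n)}$ is the constant $n!\,\mathrm{lc}(P)$, for which \eqref{eq4} is trivial, and $P^{(n-1)}$ is a linear polynomial $c(x-x_0)$; since $P^{(n-1)}$ is either even or odd by symmetry its unique root is $x_0=0$, so $|P^{(n-1)}|$ is increasing in $|x|$ on $[-1,1]$ and attains its maximum at $x=\pm1$. Likewise, should $\deg R=2$ occur (this happens at $k=n-2$), the parabola $R$ is even or odd; being even and nonconstant it is monotone in $|x|$ on each side of $0$, hence $\|R\|_\infty$ is at an endpoint — consistent with the hypothesis — and its derivative $R'$, a linear even-or-odd map, again has its maximum modulus at $\pm1$. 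So the genuinely substantive content is entirely carried by Theorem~\ref{tC}; the main obstacle, such as it is, is purely bookkeeping: tracking the parity of each $P^{(k)}$, handling the degenerate low-degree derivatives separately, and making sure the normalization $\|P^{(k-1)}\|_\infty=|P^{(k-1)}(1)|$ is faithfully propagated so that Theorem~\ref{tC} is legitimately applicable at every step of the induction.
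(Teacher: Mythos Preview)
The paper does not supply its own proof of Corollary~\ref{cA}; the statement is quoted from \cite[Corollary~1]{BN2010} and used as a black box. So there is nothing in the present paper to compare against beyond the expectation, implicit in the way Theorem~\ref{t:main} is handled, that the corollary follows from Theorem~\ref{tC} ``by making the corresponding modifications.'' Your argument does exactly that: induct on $k$, at each stage apply Theorem~\ref{tC} to the normalized symmetric oscillating polynomial $R=P^{(k-1)}/|P^{(k-1)}(1)|$ to bound the extrema of $P^{(k)}$ in one half of the index range, and then use the parity relation $P^{(k)}(-x)=(-1)^{n-k}P^{(k)}(x)$ to carry those bounds to the other half. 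This is the natural deduction and it is correct.

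Two cosmetic points worth tidying. First, Theorem~\ref{tC} is stated for the \emph{left} endpoint $a$, so the clean application is at $a=-1$ (using $\|R\|_\infty=|R(-1)|=1$), which directly controls the left-half extrema; symmetry then handles the right half and gives $|R'(-1)|=|R'(1)|$. Your phrasing ``at the endpoint $a=1$'' followed by ``the local extrema \ldots\ in the left half'' mixes the two orientations, though the conclusion is unaffected. Second, the case $\deg R=2$ corresponds to $k=n-1$, not $k=n-2$; since you already treat $P^{(n-1)}$ directly as a linear odd polynomial, that paragraph is redundant and can simply be dropped.
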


\medskip

As a consequence of the combination of Theorem \ref{tC}  (or Corollary \ref{cA}) and the structural properties of the sequence $\{Q^{(\alpha)}_{n,\lambda}\}_{n\geq 0}$ given in the previous section, we can obtain the maximization of local extremum of the derivatives for the sequence  $\{Q^{(\alpha)}_{n,\lambda}\}_{n\geq 0}$  as follows.

\medskip

Let $\{Q^{(\alpha)}_{n,\lambda}\}_{n\geq 0}$ be the sequence of monic orthogonal polynomials with respect to \eqref{ec1}. Let us consider   $x_{n,\lambda}^{\alpha, [1]}<x_{n,\lambda}^{\alpha, [2]}<\cdots< x_{n,\lambda}^{\alpha, [n]}$ the zeros of the Gegenbauer-Sobolev polynomial $Q^{(\alpha)}_{n,\lambda}$ and $N$  the maximum value attained by $|Q^{(\alpha)}_{n,\lambda}(x)|$ on the interval $[x_{n,\lambda}^{\alpha, [1]},x_{n,\lambda}^{\alpha, [n]}]$. Then $M_{n,\lambda}$ can be defined  to be the minimal real point such that $ x_{n,\lambda}^{\alpha, [n]}<M_{n,\lambda}$ and $|Q^{(\alpha)}_{n,\lambda}(M_{n,\lambda})|= N$, i.e., $M_{n,\lambda}$ is the point closest to $x_{n,\lambda}^{\alpha, [n]}$ where the maximal absolute value of the polynomial  $Q^{(\alpha)}_{n,\lambda}$ is attained. Notice that $M_{n,\lambda}$ also depends on the parameter $\alpha$ and  $Q^{(\alpha)}_{n,\lambda}\in\osc[-M_{n,\lambda},M_{n,\lambda}]$. Thus,  we can consider the following normalized polynomials

\begin{equation}
\label{poli2}
q^{(\alpha)}_{n,\lambda}(x):= \frac{Q^{(\alpha)}_{n,\lambda}(x)}{Q^{(\alpha)}_{n,\lambda}(M_{n,\lambda})}, \quad x\in [-M_{n,\lambda},M_{n,\lambda}], \quad n\geq 0.
\end{equation}

\begin{teo}
\label{t:main}
Let $\{q^{(\alpha)}_{n,\lambda}\}_{n\geq 0}$   be the sequence of orthogonal polynomials given in
 \eqref{poli2}. Then $\left|\frac{d^k}{dx^k}q^{(\alpha)}_{n,\lambda}\right|$ attains its maximal value on the interval $[-M_{n,\lambda},M_{n,\lambda}]$ at the end-points, for  $\alpha>-\frac{1}{2}$ and $1\leq k\leq n$.
\end{teo}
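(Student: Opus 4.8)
The plan is to reduce Theorem \ref{t:main} to a direct application of Corollary \ref{cA}. The key observations are already collected in Section 2: by Proposition \ref{prop2}(1) the polynomial $Q^{(\alpha)}_{n,\lambda}$ is symmetric, hence so is the rescaled polynomial $q^{(\alpha)}_{n,\lambda}$; and by the very construction of $M_{n,\lambda}$, all $n$ zeros of $q^{(\alpha)}_{n,\lambda}$ lie inside $[-M_{n,\lambda},M_{n,\lambda}]$, so $q^{(\alpha)}_{n,\lambda}\in\osc[-M_{n,\lambda},M_{n,\lambda}]$. The normalization in \eqref{poli2} gives $q^{(\alpha)}_{n,\lambda}(M_{n,\lambda})=1$, and the definition of $N$ and of $M_{n,\lambda}$ forces $\|q^{(\alpha)}_{n,\lambda}\|_{\infty}=1$ on that interval (the maximum of $|Q^{(\alpha)}_{n,\lambda}|$ is attained at $\pm M_{n,\lambda}$ and equals $N=|Q^{(\alpha)}_{n,\lambda}(M_{n,\lambda})|$).

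\textbf{Steps.} First I would rescale to a fixed interval: set $R_{n,\lambda}(x):=q^{(\alpha)}_{n,\lambda}(M_{n,\lambda}x)$, so that $R_{n,\lambda}\in\osc[-1,1]$, is symmetric, and satisfies $\|R_{n,\lambda}\|_{\infty}=|R_{n,\lambda}(1)|=1$. Second, if $\deg = n\geq 3$ (so $n\geq 1$ suffices for the conclusion but Corollary \ref{cA} is stated for a symmetric polynomial of degree $n$ with the hypothesis above; one checks the small-degree cases $n=1,2$ directly, where the claim is elementary) I apply Corollary \ref{cA} to $R_{n,\lambda}$ to conclude $\|R_{n,\lambda}^{(k)}\|_{\infty}=|R_{n,\lambda}^{(k)}(1)|$ for $k=1,\dots,n$. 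Third, I undo the scaling: since $\frac{d^k}{dx^k}q^{(\alpha)}_{n,\lambda}(M_{n,\lambda}x) = M_{n,\lambda}^{-k}\frac{d^k}{dy^k}q^{(\alpha)}_{n,\lambda}(y)\big|_{y=M_{n,\lambda}x}$, the positive constant $M_{n,\lambda}^{-k}$ cancels from both sides of the sup-norm identity, and one recovers $\left\|\frac{d^k}{dx^k}q^{(\alpha)}_{n,\lambda}\right\|_{\infty}=\left|\frac{d^k}{dx^k}q^{(\alpha)}_{n,\lambda}(M_{n,\lambda})\right|$ on $[-M_{n,\lambda},M_{n,\lambda}]$, which is the assertion. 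By symmetry the maximum is equally attained at $-M_{n,\lambda}$.

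\textbf{Main obstacle.} The only genuinely non-formal points are verifying that $M_{n,\lambda}$ is well-defined and that it places \emph{all} zeros inside $[-M_{n,\lambda},M_{n,\lambda}]$ while making $|Q^{(\alpha)}_{n,\lambda}|$ attain its interval-maximum exactly at the endpoints. For $\alpha\geq\frac{1}{2}$ this is immediate from Proposition \ref{prop2}(2) (all zeros lie in $[-1,1]$), and one takes $M_{n,\lambda}$ to be the first point past $x_{n,\lambda}^{\alpha,[n]}$ where $|Q^{(\alpha)}_{n,\lambda}|$ equals its maximum over $[x_{n,\lambda}^{\alpha,[1]},x_{n,\lambda}^{\alpha,[n]}]$ — such a point exists because $|Q^{(\alpha)}_{n,\lambda}|$ is continuous, tends to $+\infty$, and the interval-maximum $N$ is a finite value that is necessarily exceeded for large $x$. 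For $-\frac{1}{2}<\alpha<\frac{1}{2}$ one must use the second part of Proposition \ref{prop2}(2): at most one symmetric pair of zeros escapes $[-1,1]$, and one checks that these are still captured by the same $M_{n,\lambda}$ (indeed, $x_{n,\lambda}^{\alpha,[n]}$ is by definition the largest zero, and $M_{n,\lambda}>x_{n,\lambda}^{\alpha,[n]}$ by construction), so that $q^{(\alpha)}_{n,\lambda}\in\osc[-M_{n,\lambda},M_{n,\lambda}]$ in all cases. Once this bookkeeping is in place, the theorem follows from Corollary \ref{cA} with essentially no further computation.
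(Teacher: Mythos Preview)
Your proposal is correct and follows exactly the route the paper intends: the paper's own proof simply says to apply \cite[Theorem 1 (or Corollary 1)]{BN2010} with the obvious modifications, and your reduction via symmetry, the definition of $M_{n,\lambda}$, and rescaling to $[-1,1]$ is precisely that. One trivial slip: in the chain-rule step the factor should be $M_{n,\lambda}^{k}$ rather than $M_{n,\lambda}^{-k}$, but since it is a positive constant the cancellation argument is unaffected.
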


\begin{proof}
It  suffices  to follow the proof of Theorem \ref{tC} (or Corollary \ref{cA}) given in \cite[Theorem 1 (or Corollary 1)]{BN2010}  by making the corresponding modifications.
\end{proof}

\medskip

Notice that from a numerical point of view the value $M_{n,\lambda}$ can be difficult to obtain for $n$ large enough. However, for any value $K>0$ such that $N<|Q^{(\alpha)}_{n,\lambda}(x)|$ for $x < -K$ and $ x> K$, the result of Theorem \ref{t:main} remains true on the interval  $[-K,K]$.

\medskip

We finish this section providing some illustrative numerical examples (with the help of MAPLE) about the
above result for different values of $n$, $\alpha$ and $\lambda$ (see Figure 1 and Figure 2 below).

\vspace{-4cm}

\begin{figure}[h]
\vspace*{2.5cm}
    \centering
        \includegraphics[totalheight=5in]{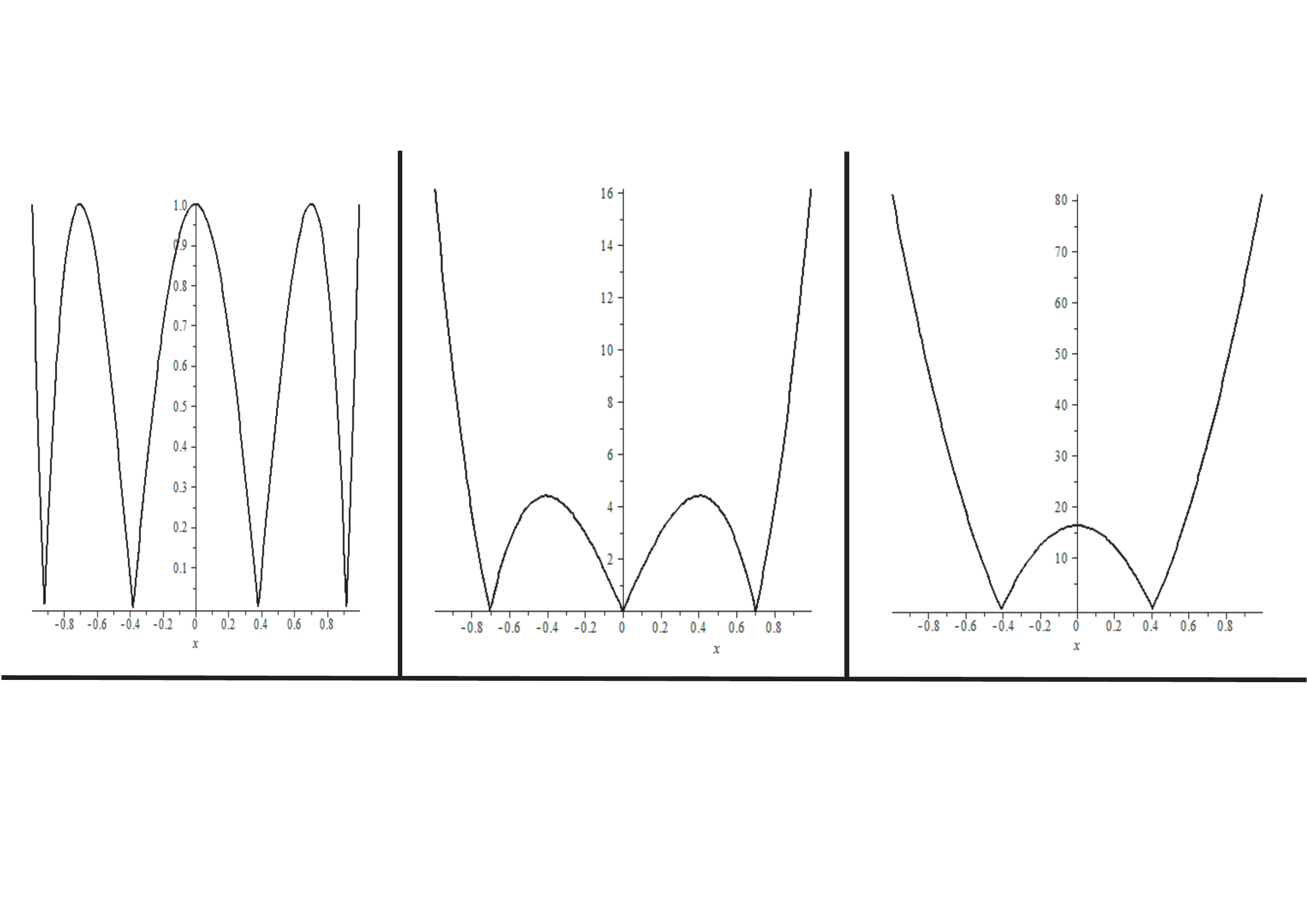}
        \vspace*{-3cm}
   \caption{Graphics of $|\frac{d^k}{dx^k}q^{(\alpha)}_{n,\lambda}|$ for $n=4$, $\alpha=\lambda=1$, $M_{n,\lambda}=0.9926198253$  and $k=0,1,2$, respectively.}
\label{figur1}
\end{figure}

\newpage

\vspace{-3cm}

\begin{figure}[h]
\vspace*{2.5cm}
    \centering
        \includegraphics[totalheight=5in]{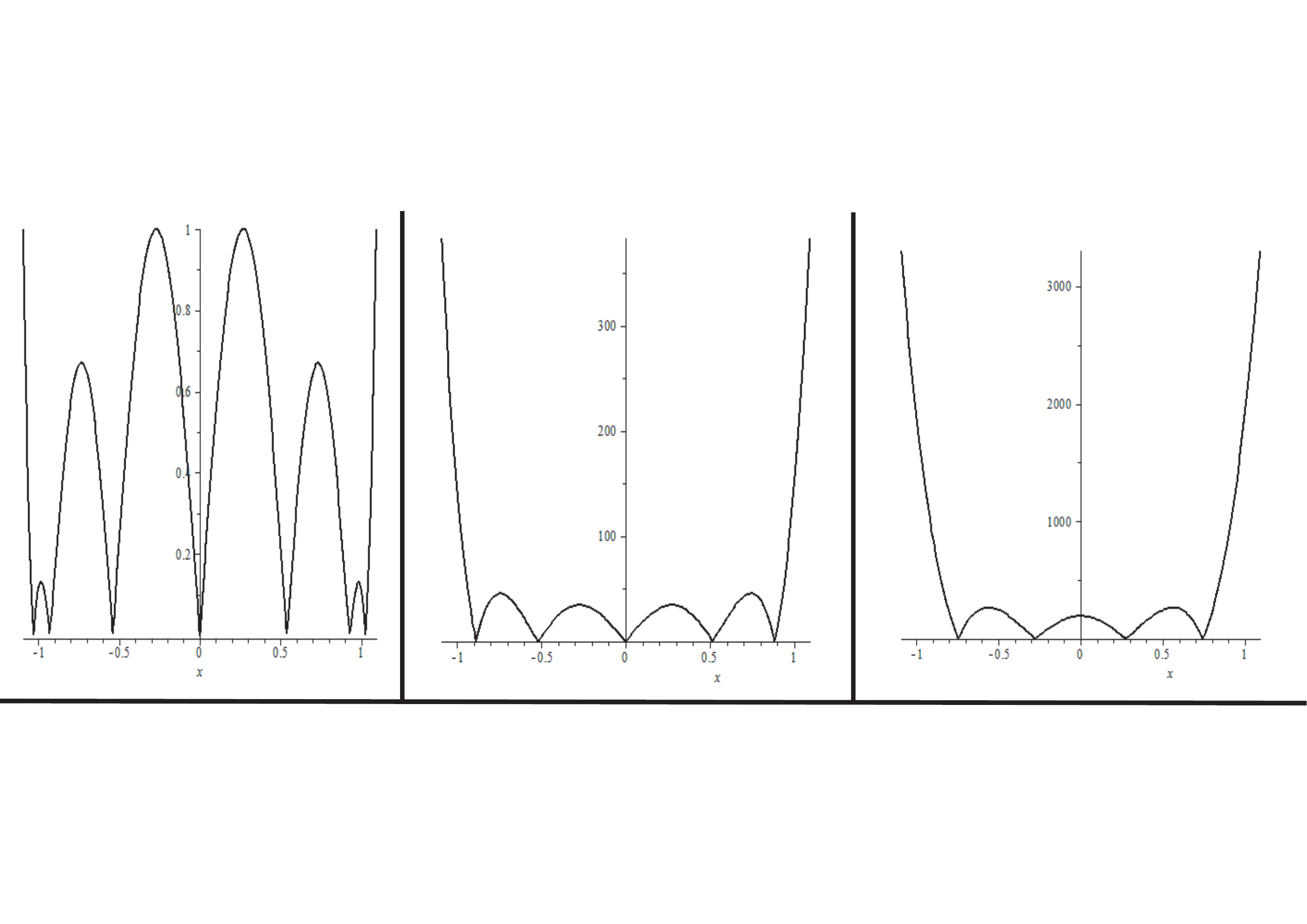}
        \vspace*{-3cm}
   \caption{Graphics of $|\frac{d^k}{dx^k}q^{(\alpha)}_{n,\lambda}|$ for $n=7$, $\alpha=-\frac{1}{4}$, $\lambda=\frac{1}{2}$, $M_{n,\lambda}=1.091516326$ and $k=0,2,3$, respectively.}
\label{figur2}
\end{figure}

\vspace{1cm}

 {\bf Acknowledgments.}\\

The authors would like to express their gratitude to Professors D.K. Dimitrov and G. Nikolov for providing them some helpful academic materials and guidance during the preparation of this manuscript.

\vspace{1cm}


\begin{thebibliography}{99}

\bibitem{BMJL06} I. Baratchart,  A. Mart\'{\i}nez-Finkelshtein, D. Jim\'enez, D.S.
Lubinsky, H.N. Mhaskar, I. Pritsker, M. Putinar, M. Stylianopoulus, V. Totik, P. Varju,  Y. Xu,
Open problems in Constructive Function Theory,
{\it  Electron. Trans. Numer. Anal.} {\bf 25} (2006), 511--525.


\bibitem{B79} B.D. Bojanov, A generalization of Chebyshev polynomials,  {\it J. Approx. Theory} {\bf 26} (1979), 293--300.

\bibitem{BN2010} B.D. Bojanov, N. Naidenov, On oscillating polynomials,  {\it J. Approx. Theory} {\bf 162} (2010), 1766--1787.

\bibitem{BR95} B.D. Bojanov, Q.I. Rahman, On certain extremal problems for polynomials,  {\it J. Math. Anal. Appl.} {\bf 189} (1995),  781--800.


\bibitem{be1995} P. Borwein and   T. Erd\'elyi,  {\it Polynomials and Polynomials
Inequalities}. Springer-Verlag, New York, EEUU, 1995.

\bibitem{dimitar2012} D.K. Dimitrov, A late report on Interlacing of zeros of polynomials,
 {\it Proc.  Constructive Theory of Functions, Sozopol 2010. In memory of Borislav Bojanov},
 G. Nikolov and R. Uluchev (Eds.), 69--79. Prof. Marin Drinov Academic Publishing House, Sofia, 2012.

\bibitem{fej1} B. Xh. Fejzullahu,  Asymptotic properties and Fourier expansions of orthogonal polynomials with a non-discrete Gegenbauer-Sobolev inner product,  {\it J. Approx. Theory} {\bf 162} (2010), 397--406.

\bibitem{fej2} B.Xh. Fejzullahu, A Cohen type inequality for Fourier expansions of orthogonal polynomials with a non-discrete Gegenbauer-Sobolev inner product, {\it Math. Nachr.} {\bf 284} (2011), 24--254.


\bibitem{GK97} W. Gautschi, A.B.J. Kuijlaars, Zeros and critical points of Sobolev orthogonal polynomials,  {\it J. Approx. Theory} {\bf 91} (1997), 117--137.

\bibitem{LPP01}  G. L\'opez-Lagomasino, I. P\'erez-Izquierdo,  H. Pijeira-Cabrera , Sobolev orthogonal polynomials in the complex plane, {\it J. Comput. Appl. Math.} {\bf 127} (2001), 219--230.

\bibitem{LPP05}  G. L\'opez-Lagomasino, I. P\'erez-Izquierdo,  H. Pijeira, Asymptotic of extremal
 polynomials in the complex plane,  {\it J. Approx. Theory} {\bf 137} (2005), 226--237.


\bibitem{DL07} D.S. Lubinsky,  A survey of weighted polynomial approximation with exponential weights, {\it Surveys in  Approximation Theory},  {\bf 3} (2007), 1--105.

\bibitem{DL94} A.L. Levin, D.S. Lubinsky,  Christoffel Functions and Orthogonal Polynomials for exponential weights on $[-1,1]$, Mem.  Amer. Math. Soc. Vol. {\bf 111} (535) Amer. Math. Soc. Providence, RI (1994).

\bibitem{LL2001}  E. Levin, D.S. Lubinsky, Orthogonal polynomials for exponential weights, Springer-Verlag, New York,  2001.

\bibitem{MPP94} F. Marcell\'an, T.E. P\'erez, M.A. Pi\~nar, Gegenbauer-Sobolev
orthogonal polynomials in \emph{A. Cuyt (Ed.), Proc. Conf.
on Nonlinear Numerical Methods and Rational Approximation II}, Kluwer Academic Publishers, Dordrecht, 1994. 71-82.


\bibitem{AMF2001} A. Mart\'{\i}nez-Finkelshtein, Analytic aspects of Sobolev orthogonal polynomials revisited, {\it J. Comp. Appl. Math.} {\bf 127} (2001), 255--266.

\bibitem{AMF97} A. Mart\'{\i}nez-Finkelshtein, J.J. Moreno-Balc\'azar, H. Pijeira-Cabrera,  Strong asymptotics for Gegenbauer-Sobolev orthogonal polynomials, {\it J. Comp. Appl. Math.} {\bf 81} (1997), 211--216.

\bibitem{Milo} G.V. Milovanovi\'{c}, D.S. Mitrinovi\'{c},  Th.M. Rassias: {\it Topics in Polynomials: Extremal problems, inequalities, zeros}, Wordl Scientific Publishing Co. Pte. Ltd., Singapore,  1994.

\bibitem{NN2005} N. Naidenov, Estimates for the derivatives of oscillating polynomials,
{\it East Journal on Approximations}, {\bf 11} (3) (2005), 301--336.

\bibitem{N86} P. Nevai, G\'eza Freud, Orthogonal Polynomials and Christoffel Functions. A Case Study.
{\it Journal of Approximation Theory}, {\bf 48} (1986),  3--167.

\bibitem{N79} P. Nevai, Orthogonal Polynomials, Mem.  Amer. Math. Soc. Vol. {\bf 18}  (213) Amer. Math. Soc. Providence, RI (1979).

\bibitem{GN2001} G. Nikolov, Inequalities of Duffin-Schaeffer type, {\it SIAM J. Math. Anal.}  {\bf 33} (3) (2001), 686--698.

\bibitem{GN2005} G. Nikolov, An extension of an inequality of Duffin and Schaeffer,
Constr. Approx. {\bf 21} (2005), 181--191.

\bibitem{PQ2011} D. P\'erez, Y. Quintana, Some Markov-Bernstein type
 inequalities and certain class of Sobolev polynomials. {\it J. Adv. Math. S.} {\bf 4} (2011), 85--100.

\bibitem{PQW2001} H. Pijeira, Y. Quintana, W.  Urbina, Zero location and asymptotic behavior of orthogonal polynomials of Jabobi-Sobolev, {\it Rev. Col. Mat.} {\bf 35} (2001), 77--97.

\bibitem{PQRT10} A. Portilla, Y. Quintana, J. M. Rodr\'{\i}guez,  E. Tour\'{\i}s,
 Zero location and asymptotic behavior for extremal polynomials with non-diagonal Sobolev norms, J. Approx. Theory {\bf 162} (2010), 2225--2242.

\bibitem{PQRT11} A. Portilla, Y. Quintana, J. M. Rodr\'{\i}guez, E. Tour\'{\i}s,
Concerning asymptotic behavior for extremal polynomials associated to non-diagonal Sobolev norms, {\it J. Funct. Spaces  Appl.} {\bf 2013}, article ID 628031 (2013), 1--11.

\bibitem{ST92} H. Stahl, V. Totik, General Orthogonal Polynomials, Cambridge University Press, Cambridge (1992).

\bibitem{Sze1} G. Szeg\H{o}, Orthogonal Polynomials, Coll. Publ. Amer. Math. Soc. Vol. 23, (4th ed.), Amer. Math. Soc. Providence, RI (1975).

\end{thebibliography}
\end{document}